\def\paramVessels{\ensuremath{n}}
\newcommand{\paramVesselSpeed}[1]{\ensuremath{v_{#1}}}
\newcommand{\paramVesselDraught}[1]{\ensuremath{\delta_{#1}}}
\newcommand{\paramVesselAmount}[1]{\ensuremath{a_{#1}}}
\def\paramStations{\ensuremath{m}}
\def\paramVesselStation{\ensuremath{C}}
\def\paramTypes{\ensuremath{f}}
\newcommand{\paramIncidentSevereity}[1]{\ensuremath{w_{#1}}}
\def\paramVesselIncident{\ensuremath{B}}
\def\paramZones{\ensuremath{z}}
\newcommand{\paramDistanceHarbourZone}[2]{\ensuremath{d_{#1 #2}}}
\def\paramVesselHarbourZone{\ensuremath{S}}
\newcommand{\paramIncidentZoneProbability}[2]{\ensuremath{q_{#1 #2}}}
\newcommand{\paramCorrectedWaterlevel}[1]{\ensuremath{\tilde{p}_{#1}}}
\def\paramTimes{\ensuremath{\mathcal{U}}}
\newcommand{\e}{\ensuremath{e}}
\def\N{\ensuremath{\mathbb{N}}}
\def\O{\ensuremath{\mathcal{O}}}
\def\NP{\ensuremath{\mathcal{NP}}}
\def\NPO{\ensuremath{\mathcal{NPO}}}
\def\R{\ensuremath{\mathbb{R}}}
\def\t{\ensuremath{t}}
\newtheorem{theorem}{Theorem}
\newtheorem{lemma}[theorem]{Lemma}
\newtheorem{corollary}[theorem]{Corollary}
\newtheorem{definition}[theorem]{Definition}
\journal{Operations Research for Health Care}
\begin{document}

\begin{frontmatter}



\title{Rescue Craft Allocation in Tidal Waters \\ of the North and Baltic Sea}

\author[inst1]{Tom Mucke}
\author[inst1]{Alexander Renneke}
\author[inst1]{Finn Seesemann}
\author[inst1]{Felix Engelhardt}

\affiliation[inst1]{organization={RWTH Aachen, Faculty of Mathematics, Computer Science and Natural Sciences, Teaching and Research Area Combinatorial Optimization},
            addressline={Templergraben 55}, 
            city={Aachen},
            postcode={52062}, 
            state={NRW},
            country={Germany}}

\begin{abstract}
This paper aims to improve the average response time for naval accidents in the North and Baltic Sea. To do this we optimize the strategic distribution of the vessel fleet used by the \ac{dgzrs} across several home stations. Based on these locations, in case of an incoming distress call the vessel with the lowest response time is dispatched. A particularity of the region considered is the fact that due to low tide, at predictable times some vessels and stations are not operational.
In our work, we build a corresponding mathematical model for the allocation of rescue crafts to multiple stations. Thereafter, we show that the problem is \NP-hard. Next, we provide an Integer Programming (IP) formulation. Finally, we propose several methods of simplifying the model and do a case study to compare their effectiveness. For this, we generate test instances based on real-world data.
\end{abstract}

\begin{keyword}
Search and Rescue \sep Tides \sep Rescue Craft Allocation \sep Maritime  \sep Integer Programming \sep Facility Location
\end{keyword}

\end{frontmatter}


\section{Introduction}\label{sec:introduction}
The water territory of Germany is home to a multitude of maritime traffic. These ships transporting both people and goods are exposed to a variety of dangers, ranging from human error to extreme weather, all of which may lead to the necessity of calling for external aid. For the water territory of Germany, the \ac{dgzrs} is primarily responsible for delivering aid  \cite{BundesministeriumderJustiz.19941111}. In 2022 alone, over 2000 instances of vessels in need of assistance were recorded  \cite{DGzRS.2023}.

Oftentimes, the speed at which an adequate rescue vessel arrives can make a difference between life and death, see \cite{timematters}. 
Thus, ensuring a timely arrival of rescue crafts is an important factor to maritime safety.
The following work uses mathematical optimization of vessel locations to reduce average arrival times, and thereby contribute to saving lives.
This is done by focusing on the strategic decision made by the \ac{dgzrs} of where to place the vessels of their fleet to ensure the fastest possible average response time for future incidents. 

The rescue process, as far as it is relevant to our work, is the following: A ship somewhere near the German coastline suffers an accident that necessitates help from outside forces. It then calls the control center of the \ac{dgzrs} and details both its position and type of incident. The control center in return checks the list of their available response vessels and decides which of them to send. This vessel is then deployed to help the ship with the incident.

Thus, the \ac{rcap} consists of allocating a set of different vessel types to stations in order to ensure minimal response times to maritime incidents, given a region consisting of zones in which incidents occur as well as stations at fixed positions.
This is hindered by the fact that each station can only house specific types of crafts as well as that at (predictable) times some stations are inoperable due to the tides. Furthermore, some regions are more incident-prone and require more attention than others.

Additionally, since the \ac{dgzrs} boats are manned by volunteers, each harbor has to house a single rescue craft.
For example, the harbour of Juist dries up twice a day. During that time, the region their station normally covers needs to be covered by the neighbouring stations. This makes it less desirable to position the fastest vessel available there. 
However we can not permanently place the station from Juist elsewhere since the crew of the ship consists of volunteers living on the island, who can not be resettled.

This works contributions are twofold. First, it is novel insofar as, to the best of our knowledge, there have been not previous attempts to use mathematical optimization for search-and-rescue craft allocation in either the North or the Baltic Sea. 
Second, the consideration of tides is a novel attribute that specifically matters in seas with a large tidal range, e.g., the North and Baltic Seas.
Additionally, we provide a working implementation of our solution algorithms and the corresponding data.

The remainder of the paper is structured as follows: in \cref{sec:literature} we introduce the \ac{rcap} and give a brief overview of related works. Past this, in \cref{sec:definition} we formally introduce the \ac{rcap}. Afterward, in  \cref{sec:complexity}, we prove that \ac{rcap} is \NP-hard and examine why. In \cref{sec:exact_sol}, we formulate the \ac{rcap} as an \ac{mip} and we consider simplifications with the aim of reducing computation times with minimal precision losses. Having done this, in \cref{sec:computationalstudy} we conduct a case study on the effect of these simplifications across several test instances. We discuss the results in \cref{sec:discussion}, and summarize our findings and give avenues for further research in \cref{sec:conclusion}.

\section{Related Work}\label{sec:literature}
In the following, we discuss research that includes optimization of search and rescue vessel locations.

The motivation driving our work is also the basis of \cite{Azofra.2007} in which the aim is to find suitable criteria to find the optimal placement of a single rescue vessel. To the best of our knowledge, \cite{Azofra.2007} is also the first paper to address the problem rescue vessel placement. The problem of optimal assignment of a single craft can also be found in \cite{Afshartous.2009} which places greater focus on the size and form of the zones generated as well as transforming the historical numbers of incidents into probabilistic values.
In comparison, \cite{jin2021} do not limit themselves to placement of a single vessel. However, they make the same assumption as us that only a single vessel is assigned to each harbour. They then solve the problem through a multistage approach based on $k$-means and nature-inspired heuristics. At the same time, their problem strongly differs from the one in this work, as it focuses on dynamic duty points at sea.

Another similar model and solution approach, including the construction and solving of an \ac{mip}, is given by \cite{Razi.2016} and \cite{Wagner.2012}. While the former analyzes historical data given by the Turkish Coast Guard, the later concentrates on the development of a practical tool for the US Coast Guard. Both differ from our work in that their aim is to minimize the deviation from given values of budget and operating hours instead of minimizing individual response times.
\cite{Razi.2016} is further expanded by \cite{Karatas.2021} in which one of the authors explores several ways of improving the \ac{mip} in terms of realism. Besides introducing a more diverse arsenal of rescue crafts it also considers how to implement and react to uncertainty in terms of incidents.
Another \ac{mip}-based approach is provided in \cite{chen2021}. Here, the authors use a two-stage approach to solve the \ac{mip}, since the original formulation does not perform well computationally.
For small instances, enumeration may also be possible, as showcased in \cite{Jung2019}.

Recent research by \cite{Hornberger.2022} focuses on the pacific ocean area the US Coast Guard is responsible for. While it shares many similarities to our work, it differs in the modeling of incidents. While we consider the incidents as being few enough that it can be assumed a needed vessel is at its harbour when the distress call comes in, \cite{Hornberger.2022} assigns every incident a certain amount of hours needed during which the responding vessel can not move to the next incident. Their work also includes the possibility of relocating vessels for a certain price.
Finally, there is some overlap with 
\cite{ma2024}, who only assign a number of homogeneous boats to sections, but do so addressing uncertainties through robust optimization.

The field of research regarding maritime search and rescue also has many works with a similar motivation but a different approach in terms of model and aim. Other related work primarily focuses on on the possible causes of incidents, and on how to compare and combine their severity \cite{Zhou.2022}. \cite{FeldensFerrari.2020} also deals with maritime search and rescue but in terms of searching a given area of a single incident.

An overview of the different sources and their properties is given in table \cref{tab:source:comp}.

\begin{table}[h!tb]
\setlength{\tabcolsep}{5pt}
    \centering
    \begin{tabular}{lccccc}
    \multirow{2}{*}{source {\color{gray}(comment)}} & given & vessel & \multirow{2}{*}{zones} & \multirow{2}{*}{IP} & \multirow{2}{*}{algorithm}\\
    & stations & types & & & \\\hline 
     
    Afshartous et al. (2009) \cite{Afshartous.2009} & $\sim$ & n & y & y & n  \\
    \multicolumn{6}{l}{\color{gray}Conversion of incident data into incident probabilities} \\
    
    Ai et al. (2015) \cite{Ai.2015} & y & $\sim$ & y & $\sim$ & y  \\
    \multicolumn{6}{l}{\color{gray}Comparison of heuristics for solution finding} \\
    
    Azofra et al. (2007) \cite{Azofra.2007} & y & n & y & $\sim$ & n \\
    \multicolumn{6}{l}{\color{gray}Basic model for assigning a single vessel} \\

    Chen et al. (2021) \cite{chen2021} & y & y & y & y & y \\
    \multicolumn{6}{l}{\color{gray}Separation into tactical and operational phase} \\

    Jin et al. (2021) \cite{jin2021} & n & y & y & n & y \\ 
    \multicolumn{6}{l}{\color{gray}Minimisation of construction and maintenance cost} \\
    
    Jung \& Yoo (2019) \cite{Jung2019} & $\sim$ & n & y & n & y \\
    \multicolumn{6}{l}{\color{gray}Consideration of islands and coastline in distance calculation} \\
    
    Feldens \& Chen (2020) \cite{FeldensFerrari.2020} & y & n & y & y & n \\
    \multicolumn{6}{l}{\color{gray}Maximisation of covered area in search and rescue operation} \\
    
    Hoernberger et al. (2020) \cite{Hornberger.2022} & y & $\sim$ & y & y & n \\
    \multicolumn{6}{l}{\color{gray}Inclusion of relocation costs in regards to current allocation} \\
    
    Karatas (2021) \cite{Karatas.2021} & y & y & y & y & n \\
    \multicolumn{6}{l}{\color{gray}Considers tradeoff between response time, working hours and budget} \\

    Ma et al. (2024) \cite{ma2024} & $\sim$ & n & $\sim$ & y & n \\
    \multicolumn{6}{l}{\color{gray}Robust optimization} \\

    Pelot et al. (2015) \cite{Pelot2015} & y & y & y & y & n \\
    \multicolumn{6}{l}{\color{gray}Multiple modelling approaches} \\
    
    Razi \& Karatas (2016) \cite{Razi.2016} & y & y & y & y & n \\
    \multicolumn{6}{l}{\color{gray}Inclusion of different incident types} \\
    
    Wagner \& Radovilsky (2012) \cite{Wagner.2012} & y & y & n & y & n \\
    \multicolumn{6}{l}{\color{gray}Development of model for practical use} \\
    
    Zhou et al. (2022) \cite{Zhou.2022} & y & n & y & n & n \\
    \multicolumn{6}{l}{\color{gray}Search and rescue from a game theoretical perspective}
    \end{tabular}
    
    \caption{Literature on rescue vessel placement. The entries indicates [y]es or [n]o, with [$\sim$] denoting special cases.}
    
    \label{tab:source:comp}
    
\end{table}

As the first column shows, almost all sources have given stations and the \textit{zones} column signals that nearly all of them are working with zones to describe and solve the optimization problem. The \textit{vessel types} column indicates if a source considered vessels with different characteristics like we do or if the focus of allocation is on another aspect. The last two columns reveal that solving the problem is mostly done through a \ac{mip}-solver rather than by using a specific algorithm.

For a broader overview over the topic of \ac{SAR} operations, we refer to the state of the art paper by \cite{Akbayrak2017}. The authors note that in general, the subject of this work, the allocation of assets (vessels) to stations (harbors) is one of multiple closely related fields, i.e., location modelling of \ac{SAR} stations, allocation modelling of \ac{SAR} assets, risk assessment modelling of \ac{SAR} areas, and search theory and \ac{SAR} planning modelling. For, a general discussion of \ac{SAR} and its connection to other medical facility locations problems, including stochastic variations, we refer to \cite{Pelot2015}.

\section{Model}\label{sec:definition}
Next we formulate our model for the \ac{rcap}. Here, consideration of the tides is the most notable difference from the models of the previously listed sources.

\subsection{Parameters}
First we define the parameters needed for a single instance. For each instance, this includes information on the available vessels, stations and zones.

Regarding the available vessels, a defining trait is the total number of different types of vessels $\paramVessels \in \mathbb{N}$, the speed of each type of vessel $\paramVesselSpeed{1},\dots, \paramVesselSpeed{\paramVessels} \in \mathbb{N}$, the draught of each vessel $\paramVesselDraught{1}, \dots, \paramVesselDraught{\paramVessels} \in \R^+$ as well as the amount of available vessels for each type $\paramVesselAmount{1}, \dots, \paramVesselAmount{\paramVessels} \in \mathbb{N}$.
In terms of stations, we need to know their total number $\paramStations \in \mathbb{N}$ and, to represent the relationship between vessel types and stations, a set $\paramVesselStation \subseteq \underline{\paramVessels} \times\underline{\paramStations}$,\footnote{$\underline{n} \coloneqq \{1,\;\dots,\;n\}$ for $n \in \mathbb{N}$} where $(i, j) \in \paramVesselStation$ means that a vessel of type $i$ can be positioned at station $j$. 

Due to the tides the water level at each of the stations changes regularly which leads to vessels with a too high draught being unable to leave. To model this, we consider data of the tides in a period $\mathcal{T}$. We then consider each point of time $\t \in \mathcal{T}$ and collect all usable combinations of vessels and stations $\e \subseteq \underline{\paramStations} \times \underline{\paramVessels}$ at that point of time $\t$. We define a uncertainty set $\paramTimes \subseteq \mathcal{P}(\underline{\paramStations} \times \underline{\paramVessels})$ that contains all usable configurations $\e$. 

The incidents the vessels respond to are grouped into $\paramTypes \in \mathbb{N}$ types with severities $\paramIncidentSevereity{1}, \dots, \paramIncidentSevereity{\paramTypes} \in \mathbb{R}$. Since some incident types may require specific features of a vessel, such as enough weight and power to tow a heavy vessel, the set $\paramVesselIncident \subseteq \underline{\paramVessels} \times \underline{\paramTypes}$ represents compatibility between vessel and incident types.

The territory to be covered is divided into $\paramZones \in \mathbb{N}$ zones, which have certain distances to the stations represented by \paramDistanceHarbourZone{j}{r} for the distance between station $j$ and zone $r$. Because not every vessel can reach every zone from every station (e.g.,  due to tank size or offshore unsuitability), we have a set $\paramVesselHarbourZone \subseteq \underline{\paramVessels} \times \underline{\paramStations} \times \underline{\paramZones} $ representing the compatible vessel-station-zone-combinations. 

\subsection{Feasible Solutions}
A feasible solution consists of two parts. The first one represents the allocation of the vessel types to the stations. Let $G$ be a graph whose nodes represent all vessel types and all stations and whose edges are given by the set of compatible combinations $\paramVesselStation$. A solution consists of a (not necessarily optimal) bipartite $b$-Matching $M$ with $b(v) = \paramVesselAmount{i}$ if $v \in V(G)$ represents vessel type $i$ and $b(v) = 1$ if it represents a station. A vessel type $i$ is assigned to a station $j$ if the edge between their nodes is part of $M$.

The second part of a feasible solution ensures that every incident can be responded to. It is a mapping $u: \; \underline{\paramTypes} \times \underline{\paramZones} \times \paramTimes \rightarrow \underline{\paramStations}$ that assigns every combination of incident type $k$, zone $r$ and water state $e$ a responding station $j$ and, combined with $M$, a responding vessel type $i$. This mapping has to adhere to the limitations outlined above: The vessel type must be able to help with the incident type meaning $(i,k) \in \paramVesselIncident$, it must be able to traverse the distance between harbour and zone meaning $(i,j,r) \in \paramVesselHarbourZone$ and the station must have a high enough water level to be operational for the vessel so $(j, i) \in \e$.

\subsection{Objective Function}
The aim of our model is to minimize the average weighted response time to incidents at sea. For every combination $(k,r,e)$ of incident type, zone and water state and their responding station $j=u(k,r,e)$, there is a distance \paramDistanceHarbourZone{j}{r}.
Let $i$ be the vessel type of the node matched to the node of $j$ in $M$. Dividing the distance by the speed $\paramVesselSpeed{i}$, a response time can be calculated. The average weighted response time of a solution is defined as the sum of all response times, each multiplied by the severity $\paramIncidentSevereity{k}$ of the incident type.
\begin{equation}\label{eq:opt_obj_func_stochastic}
\min_{\substack{(k,r,e) \in \paramTypes \times \paramZones \times \paramTimes\\ j=u(k,r,e)}} 
\mathbb{E} \left(
\frac{\paramDistanceHarbourZone{j}{r}}{\paramVesselSpeed{i}} {\paramIncidentSevereity{k}} 
\right).
\end{equation}
We assume that for every zone $r$ and incident type $k$ there is a frequency $\paramIncidentZoneProbability{k}{r} \in [0,1]$ of an incident type occurring in the given zone. To track how often each usable configuration $\e$ appears in the whole time period $\mathcal{T}$, we set $\paramCorrectedWaterlevel{e} \in [0,1]$ to the percentage of time it occurs. Note that the equation  
$$\sum_{e \in \paramTimes} \paramCorrectedWaterlevel{e} = 1$$ 
is true, since at every point of time $x \in \mathcal{T}$ exactly one configuration $e$ occurs.
Thus, we can reformulate the objective as 
\begin{equation}\label{eq:opt_obj_func}
    \min \sum_{\substack{(k,r,e) \in \paramTypes \times \paramZones \times \paramTimes\\ \\ j=u(k,r,e)}} \frac{\paramDistanceHarbourZone{j}{r}}{\paramVesselSpeed{i}} \paramIncidentSevereity{k} \paramIncidentZoneProbability{k}{r} \paramCorrectedWaterlevel{e}.
\end{equation}
To summarize, an overview of all parameters and variables is given in \cref{tab:variables}.

\begin{sidewaystable}
\setlength{\tabcolsep}{5pt}
    \centering
    \begin{tabular}{l|l|l}
    Variable & Element of & Usage \\
     \hline 
    $\paramVesselAmount{i}$             & $ \N $            & amount of vessels of type $i$ available \\
    
    $\paramVesselIncident $             & $\mathcal{P}( \underline{\paramVessels} \times \underline{\paramTypes} )$          & $(i,k) \in \paramVesselIncident$ if vessel type $i$ has the equipment required for incident type $k$ \\
    
    $\paramVesselStation $              & $\mathcal{P}( \underline{\paramVessels} \times \underline{\paramStations} )$           & $(i,j) \in \paramVesselStation$ if vessel type $i$ can be placed at station $j$ \\
    
    $\paramDistanceHarbourZone{j}{r} $  & $ \R^+ $                  & distance between station $j$ and zone $r$ \\
    
    $e$                                 & $\mathcal{P}(\underline{\paramVessels} \times \underline{\paramStations})$         & state of tides in which exactly the vessel-station combinations $(i,j) \in e$ are available  \\
    
    $\paramTimes$                       & $ \mathcal{P}(\mathcal{P}(\underline{\paramVessels} \times \underline{\paramStations}))$    & uncertainty set of tide states with non-zero probability \\
    $ \paramTypes$                      & $ \N $                    & amount of incident types \\
    $ i$                                & $\underline{\paramVessels} $ & index referring to a certain vessel type \\
    $ j$                                & $\underline{\paramStations} $ & index referring to a certain station \\
    $ k$                                & $\underline{\paramTypes} $ & index referring to a certain incident type \\
    $ \paramStations$                   & $\N$                      & amount of stations \\
    $M$                                 &                           & $b$-matching representing the vessel-station-allocation \\
    $ \paramVessels$                    & $\N$                      & amount of vessel types \\
    $\paramCorrectedWaterlevel{e}$        & $[0,1]$                   & probability of tidal state $e$ occurring \\
    $\paramIncidentZoneProbability{k}{r}$ & $[0,1]$                 & frequency of incident type $k$ occuring in zone $r$ \\
    $ r$                                & $\underline{\paramZones}$ & index referring to a certain zone \\
    $ \paramVesselHarbourZone$          & $\mathcal{P} ( \underline{\paramVessels} \times \underline{\paramStations} \times \underline{\paramZones} )$                          & $(i,j,r) \in \paramVesselHarbourZone$ if vessel type $i$ can travel from station $j$ to zone $r$ \\
    $u$                                 &                           & the vessel of station $u(k,r,e)$ responds to incident-zone-tide combination $(k,r,e)$ \\
    $\paramVesselSpeed{i}$              & $\R^+ $                   & speed of vessel type $i$ \\
    $\paramIncidentSevereity{k}$        & $\R^+ $                   & severity of incident type $k$ \\
    $\mathcal{X}$                       & $\mathcal{P}( \underline{\paramVessels} \times \underline{\paramStations} )$  & set of all possible vessel-station combinations \\
    $\mathcal{Y}$                       & $\mathcal{P}( \underline{\paramVessels} \times \underline{\paramStations} \times \underline{\paramTypes} \times \underline{\paramZones}\times \underline{\paramTimes} )$  & set of all possible vessel-station-incident-zone-tide combinations \\
    $\paramZones$                       & $\N$                      & amount of zones \\
    $ \delta_i$                         & $\R^+ $                   & draught of vessel type $i$
    \end{tabular}
    \caption{Overview variables where $\underline{x} \coloneqq \{1,\;\dots,\;x\}$ for $x \in \mathbb{N}$}
    \label{tab:variables}
\end{sidewaystable}

\section{Complexity} \label{sec:complexity}
In this section, we examine the complexity of the \ac{rcap}. We show that the feasibility problem corresponding to \ac{rcap} is  \NP-hard by reduction from \ac{x3c}. Based on this, we argue that the \ac{rcap} is $\mathcal{NPO}$-complete. 

\begin{definition}\label{def:frcap}
    Let $A$ be an instance of the \ac{rcap}. We define the \ac{frcap} as the problem of finding a feasible solution for $A$, or proving that no such solution exists.
\end{definition}
Note that the \ac{frcap} is a decision problem, whereas \ac{rcap} is an optimization problem. We do all proofs for the decision problem and then extend them to the optimization problem.

\begin{lemma}\label{lemma:np}
    The \ac{frcap} is in \NP.
\end{lemma}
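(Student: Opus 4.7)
The plan is to exhibit a polynomially sized certificate and a polynomial time verification procedure, which is the standard route for membership in $\NP$.

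First, I would specify the certificate. A feasible solution consists of the $b$-matching $M$ together with the mapping $u$. The matching $M$ is a subset of the edge set induced by $\paramVesselStation \subseteq \underline{\paramVessels}\times\underline{\paramStations}$, so it can be encoded by at most $\paramStations$ edges (since each station node has $b$-value $1$). The mapping $u$ is a function from $\underline{\paramTypes}\times\underline{\paramZones}\times\paramTimes$ to $\underline{\paramStations}$, which can be stored as a table of $\paramTypes\cdot\paramZones\cdot|\paramTimes|$ entries; since $\paramTimes$ is given explicitly as part of the input (it is listed in the parameters of the instance), this table is polynomial in the input size.

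Second, I would describe how a verifier checks the certificate in polynomial time. The verifier performs the following steps: (i) confirm that every edge of $M$ lies in $\paramVesselStation$ and that $M$ respects the $b$-values, i.e., every station vertex is incident to at most one edge of $M$ and every vessel type vertex $i$ is incident to at most $\paramVesselAmount{i}$ edges; (ii) for each triple $(k,r,e)\in\underline{\paramTypes}\times\underline{\paramZones}\times\paramTimes$ set $j\coloneqq u(k,r,e)$, retrieve the vessel type $i$ matched to $j$ in $M$ (failing if $j$ is unmatched), and check that $(i,k)\in\paramVesselIncident$, $(i,j,r)\in\paramVesselHarbourZone$, and $(j,i)\in e$. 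Each of these checks is a lookup in a set whose size is polynomial in the input, so the overall running time of the verifier is polynomial.

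The argument has no real obstacle: the only subtlety is to note that $\paramTimes$, although formally a subset of $\mathcal{P}(\underline{\paramStations}\times\underline{\paramVessels})$ of potentially exponential cardinality in $\paramStations\paramVessels$, is given to us explicitly as part of the input description, so iterating over its elements and over all $(k,r,e)$ combinations is polynomial in the input length. Hence both the certificate and the verification fit into polynomial bounds, and \ac{frcap} lies in $\NP$.
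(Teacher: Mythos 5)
Your proposal is correct and follows essentially the same route as the paper: exhibit the pair $(M,u)$ as a polynomial-size certificate and verify station--vessel compatibility, incident compatibility, reachability, and operability under each tidal state $e$ in polynomial time. If anything, you are slightly more explicit than the paper's proof, since you also verify the $b$-matching bounds $\paramVesselAmount{i}$ and note that $\paramTimes$ is part of the explicit input, but the argument is the same.
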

\begin{proof}
    A solution must consist of a function $M: \; \underline{\paramStations} \; \rightarrow \; \underline{\paramVessels}$ assigning every station in $\underline{\paramStations}$ a vessel in $\underline{\paramVessels}$ as well as a function $u: \; \underline{\paramTypes} \times \underline{\paramZones} \times \paramTimes \rightarrow \underline{\paramStations}$ assigning every water state $e$, zone in $\underline{\paramZones}$ and incident type in $\underline{\paramTypes}$ a responding station in $\underline{\paramStations}$.

Given these, a in a feasible solution harbours only house compatible vessels, meaning $(j,M(j)) \in \paramVesselStation$ for all $j \in \underline{\paramStations}$, which can be done in $\O(\paramStations)$.
Furthermore we need to:
\begin{itemize}
    \item Verify that the vessel of the responding station is operable during the given water state, i.e., the harbour vessel combination must be in $e$.
    \item Ensure, that the vessel is equipped to deal with the given incident, meaning $(M(u(k,r,e)), k) \in \paramVesselIncident$ must be true for all $k \in \underline{\paramTypes}, \; r \in \underline{\paramZones}, \; e \in \paramTimes$.
    \item Check that the given zone in $\paramZones$ is reachable from the specified station by the responding vessel, meaning $(M(u(k,r,e)), u(k,r,e), r) \in \paramVesselHarbourZone$ must be true for all $k \in \underline{\paramTypes}, \; r \in \underline{\paramZones}, \; e \in \paramTimes$.
\end{itemize}
Since each of these categories requires either $\O ( \paramStations )$ or $\O ( \paramTypes \paramZones \left| \paramTimes \right|)$ checks with a runtime of $\O(1)$ each, in total we have a polynomial runtime of $\O (\paramStations + \paramTypes \paramZones \left| \paramTimes \right|)$.
\end{proof}

\begin{lemma}\label{lemma:nphard}
    \ac{frcap} is \NP-hard, even for only two vessel types, disregarded differing vessel-ranges, vessel-station incompatibilities, incident types and water states.
\end{lemma}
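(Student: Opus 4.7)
The plan is to reduce from \ac{x3c}, which is well known to be \NP-complete. Given an \ac{x3c} instance with universe $U = \{u_1, \ldots, u_{3q}\}$ and a family $\mathcal{C} = \{C_1, \ldots, C_t\}$ of $3$-element subsets of $U$, I would construct an \ac{frcap} instance as follows: take $\paramVessels = 2$ vessel types with amounts $\paramVesselAmount{1} = q$ and $\paramVesselAmount{2} = t - q$; take $\paramStations = t$ stations, one per set $C_j$; take $\paramZones = 3q$ zones, one per element $u_r$; take a single incident type ($\paramTypes = 1$) and a single water state $e = \underline{\paramStations} \times \underline{\paramVessels}$ with $\paramTimes = \{e\}$. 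I would set $\paramVesselStation = \underline{\paramVessels} \times \underline{\paramStations}$ and $\paramVesselIncident = \{(1,1),(2,1)\}$, so that no vessel-station or vessel-incident restrictions apply, and give both vessel types identical speed and draught, so that no difference in vessel ranges is exploited. The only nontrivial ingredient is the vessel-station-zone set: $(1,j,r) \in \paramVesselHarbourZone$ precisely when $u_r \in C_j$, and $(2,j,r) \notin \paramVesselHarbourZone$ for every $(j,r)$.

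Next I would show that this instance is feasible if and only if the \ac{x3c} instance admits an exact cover. For the forward direction, given an exact cover $\mathcal{C}' \subseteq \mathcal{C}$ of size $q$, I would place vessel type $1$ at the $q$ stations corresponding to sets in $\mathcal{C}'$ and vessel type $2$ at the remaining $t-q$ stations, which yields a valid $b$-matching. For each zone $r$, I would set $u(1,r,e) = j$ where $C_j$ is the unique member of $\mathcal{C}'$ containing $u_r$; by construction this satisfies all three compatibility conditions identified in the proof of \cref{lemma:np}. For the reverse direction, any feasible solution must place vessel type $1$ at exactly $q$ stations (since $\paramVesselAmount{1}=q$, and vessel type $2$ cannot respond to any zone), and the union of the corresponding sets must cover all $3q$ elements.

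The main subtlety will be the reverse direction: it hinges on vessel type $2$ being effectively useless for coverage, so that feasibility is entirely determined by the placement of vessel type $1$; the tight counting $3q = q \cdot 3$ then forces the $q$ chosen sets to be pairwise disjoint, i.e., an exact cover. The construction is clearly polynomial in the size of the \ac{x3c} instance, so combining it with \cref{lemma:np} yields \NP-completeness of \ac{frcap}, and since every feasible \ac{rcap} instance is trivially an instance of \ac{frcap}, this establishes the \NP-hardness claim of the lemma.
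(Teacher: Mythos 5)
Your proposal is correct and follows essentially the same route as the paper: a reduction from \ac{x3c} with one station per $3$-set, one zone per element, a single incident type and a single water state, $q$ vessels of a useful type and $|D|-q$ of a type that cannot reach any zone, with feasibility forced by the counting argument that $q$ type-one stations can cover at most $3q$ zones. The only cosmetic difference is that the paper additionally fixes unit distances, speeds and severities (irrelevant for the feasibility version), so no further changes are needed.
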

\begin{proof}
We use the \ac{x3c} to prove the \NP-hardness of the \ac{frcap}. Each instance of the \ac{x3c} 
consists of a set $X$ with $q \coloneq \frac{|X|}{3} \in \N$ and a set $D \subseteq \{x \subseteq 
X : |x|=3\}$. For this instance the decision problem is whether there exists a subset $A 
\subseteq D$ such that $|A|=q$ and $\bigcup A = 
X$. This problem is a known \NP-complete problem, see \cite[221]{Garey.1979}.

Given an instance of the \ac{x3c}, we now construct an equivalent \ac{rcap} instance. To do so we create one zone per element of $X$, one station for each set in $D$, and two vessel types \Romanbar{1} and \Romanbar{2}. We only create a singular incident type of severity $1$ and probability $1$ in every zone. The distance from any station to any zone is set to $1$ and the speed of all vessels is set to $1$ as well. There are $q$ vessels of type \Romanbar{1} and $|D|-q$ of type \Romanbar{2} available.
Each vessel is operable in every station at all times. All vessels can be assigned to any of the stations. Vessels of type \Romanbar{2} can not reach any zone from any of the harbours, vessels of type \Romanbar{1} positioned at the station corresponding to $d \in D$ can reach all zones corresponding to one of the elements in $d$. Every vessel type is capable of assisting with the single incident type.

To show such an instance is equivalent to the original we will prove that a feasible solution to the \ac{rcap}-instance exists if and only if the \ac{x3c} instance is a yes-instance. 

First, starting with a solution to the \ac{rcap}, let $A$ be the set of all elements of $D$ which correspond to stations that have a vessel of type \Romanbar{1} assigned to them. This is a solution to the \ac{x3c} as there are $q$ ships of type \Romanbar{1}, so $|A| = q$ and all $3q$ zones are covered, while each station can cover up to $3$ zones with a vessel of type \Romanbar{1} and $0$ zones with type \Romanbar{2}. Due to $\paramZones = 3q$ this means that each station with a vessel of type \Romanbar{1} assigned covers exactly $3$ zones, thereby all of the elements of $X$ occur in exactly one set in $A$ and $\bigcup A = X$.

Second, starting with a solution $A$ to the instance of the \ac{x3c}, we assign vessels of type \Romanbar{1} to all stations corresponding to an Element in $A$ and vessels of type \Romanbar{2} to the remaining stations. Then for each zone with corresponding element $x \in X$ exactly one station with a vessel of type \Romanbar{1} is capable of responding to incidents of the singular type in that zone as $x \in \bigcup A$. This gives a feasible (and optimal) solution.

As the two instances are equivalent and the transformation is polynomial this proves that the \ac{frcap} is \NP-hard.
\end{proof}

\begin{theorem}
    \ac{frcap} is \NP-complete, even for only two vessel types, disregarded differing vessel-speeds, vessel-station incompatibilities, incident types and water states.
\end{theorem}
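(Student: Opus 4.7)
The plan is that this theorem needs no further reduction or new construction: it follows immediately by combining the two lemmas already proved. Concretely, \textbf{Lemma \ref{lemma:np}} places \ac{frcap} in \NP{} via a polynomial-time verifier that checks the matching $M$ and the response map $u$ against the compatibility sets $\paramVesselStation$, $\paramVesselIncident$, $\paramVesselHarbourZone$ and against each tide state $e \in \paramTimes$. \textbf{Lemma \ref{lemma:nphard}} supplies \NP-hardness through the reduction from \ac{x3c}. Membership plus hardness yields \NP-completeness, so the body of the proof is a two-line invocation of the preceding results.

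The only thing that warrants care is verifying that the restricted parameter regime asserted in the theorem is actually the regime produced by the reduction in \Cref{lemma:nphard}. First I would walk through the constructed instance and confirm each restriction: exactly two vessel types \Romanbar{1} and \Romanbar{2} are used; every vessel has speed $1$, so vessel-speeds are uniform; $\paramVesselStation = \underline{\paramVessels}\times\underline{\paramStations}$, so no vessel-station incompatibilities are exploited; a single incident type of severity and probability $1$ is created; and every station-vessel pair is operable at all times, which corresponds to a trivial $\paramTimes$ consisting of one water state. Each of these matches the list in the theorem statement verbatim.

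The main (mild) obstacle is a cosmetic mismatch in wording: \Cref{lemma:nphard} says \emph{vessel-ranges} while the theorem says \emph{vessel-speeds}. I would address this by noting explicitly that the reduction assigns identical speed $1$ to all vessels, so hardness persists even when differing vessel-speeds are forbidden; the discriminating mechanism in the construction is the reachability set $\paramVesselHarbourZone$ (type \Romanbar{2} cannot reach any zone), which is not among the features the theorem asks to be disregarded. Hence the restricted hardness claim in the theorem is already witnessed by the same reduction.

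Putting it together, the proof I would write is essentially: \emph{By \Cref{lemma:np}, \ac{frcap} $\in \NP$. By \Cref{lemma:nphard}, \ac{frcap} is \NP-hard under the construction above, which uses only two vessel types, uniform vessel-speeds, no vessel-station incompatibilities, a single incident type, and a single water state. Combining these gives \NP-completeness under the stated restrictions.} No new calculation is required beyond the bookkeeping check in the second paragraph.
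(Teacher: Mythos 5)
Your proposal matches the paper's proof, which is exactly the two-line invocation of \cref{lemma:np} and \cref{lemma:nphard}; your extra bookkeeping that the reduction indeed uses uniform speeds (so the hardness also holds under the theorem's ``vessel-speeds'' wording, with reachability via $\paramVesselHarbourZone$ doing the work) is a correct and welcome clarification the paper leaves implicit.
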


\begin{proof}
    This follow immediately from \cref{lemma:np} and \cref{lemma:nphard}.
\end{proof}

\begin{corollary}
    \ac{rcap} is \NPO-complete, even for only two vessel types, disregarded differing vessel-speeds, vessel-station incompatibilities, incident types and water states.
\end{corollary}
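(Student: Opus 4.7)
The plan is to combine \cref{lemma:np} and \cref{lemma:nphard} to lift decision-level hardness of \ac{frcap} to optimization-level inapproximability of \ac{rcap}, so that the corollary follows as a routine consequence of the preceding theorem without any new combinatorial work.

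First I would verify that \ac{rcap} lies in \NPO. Candidate solutions consist of a $b$-matching $M$ on the bipartite graph with $\paramVessels+\paramStations$ vertices and a mapping $u$ whose domain $\underline{\paramTypes}\times\underline{\paramZones}\times\paramTimes$ has polynomial size, so the encoding is polynomial in the input; polynomial-time feasibility checking is exactly the content of \cref{lemma:np}; and the objective \eqref{eq:opt_obj_func} is a sum of polynomially many rational products, hence polynomial-time computable.

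For \NPO-hardness I would invoke the standard observation that \NP-hardness of the feasibility problem rules out every polynomial-time approximation algorithm, at any ratio, unless $\mathcal{P}=\NP$. Indeed, any $\alpha$-approximation algorithm would in particular emit a feasible solution whenever one exists and signal failure otherwise, thereby deciding \ac{frcap} in polynomial time and contradicting \cref{lemma:nphard}. The \ac{x3c}-reduction used there already respects every restriction stated in the corollary (two vessel types, a single incident type, uniform speeds, uniform distances, trivial water states), so the restricted \ac{rcap} inherits the same inapproximability verbatim.

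The main subtlety I expect is pinning down the precise notion of reduction underlying ``\NPO-complete''. Under the standard AP-reduction-based definition, the combination of \NPO-membership with the impossibility of any polynomial-time approximation already places \ac{rcap} among the hardest problems of \NPO; formally, one invokes transitivity through any fixed \NPO-complete benchmark problem, observing that every AP-reduction ending at \ac{frcap} extends trivially to \ac{rcap} because the feasibility gap is preserved by the construction of \cref{lemma:nphard}.
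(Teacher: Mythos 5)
Your proposal follows essentially the same route as the paper: the paper's proof of this corollary is a single line deriving \NPO-completeness immediately from the \NP-completeness of \ac{frcap}, i.e.\ from \cref{lemma:np} and \cref{lemma:nphard}, which is exactly the lift you describe. Your extra detail (explicit \NPO-membership and the observation that the reduction of \cref{lemma:nphard} already obeys the corollary's restrictions) merely fleshes out what the paper leaves implicit.
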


\begin{proof}
    This follow immediately from the \NP-completeness of \ac{frcap}.
\end{proof}

Note that we could equally formulate \cref{lemma:nphard} in terms of ranges, not inabilities to reach certain zones.
Furthermore, note that the two different vessel types and their (in)ability to reach the incident zones were key to our reduction.
This difference can alternatively be replaced by a difference in speed:

\begin{corollary}
        \ac{rcap} is \NPO-complete, even for only two vessel types, disregarded differing vessel-ranges, vessel-station incompatibilities, incident types and water states.
\end{corollary}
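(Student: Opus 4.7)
The plan is to follow the \ac{x3c}-reduction of \cref{lemma:nphard} as closely as possible, but re-encode the element-set incidence structure through the distances $\paramDistanceHarbourZone{j}{r}$ and a speed gap between the two vessel types, instead of through ranges. Given an \ac{x3c}-instance with $|X| = 3q$ and $D$, I would construct an \ac{rcap}-instance with $3q$ zones (one per element of $X$), $|D|$ stations (one per set in $D$), and two vessel types \Romanbar{1} and \Romanbar{2} of speeds $1$ and $\tfrac{1}{2}$ respectively, with $q$ vessels of type \Romanbar{1} and $|D|-q$ of type \Romanbar{2}. There is a single incident type of severity $1$ occurring with probability $1$ in every zone, a single water state in which every vessel-station combination is available, and the compatibility sets $\paramVesselStation$ and $\paramVesselHarbourZone$ are taken to be complete, so that none of the features excluded by the statement is used. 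The entire \ac{x3c} structure is encoded in the distances: set $\paramDistanceHarbourZone{j}{r} = 1$ when the element associated with zone $r$ lies in the set associated with station $j$, and $\paramDistanceHarbourZone{j}{r} = 2$ otherwise.

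Because feasibility is now automatic, the whole reduction has to run through the objective, so I would prove \NP-hardness of the associated decision version via a threshold on the optimal value, with threshold $T = 3q$. If the \ac{x3c}-instance admits an exact cover $A$, placing the $q$ type-\Romanbar{1} vessels at the stations corresponding to $A$ allows every zone to be served at distance $1$ by a type-\Romanbar{1} vessel, giving objective exactly $3q$. Conversely, if no exact cover exists, then in any allocation the set $A'$ of stations holding a type-\Romanbar{1} vessel has $|A'| = q$, and by a size argument ($q$ triples covering $3q$ elements would force an exact cover) $A'$ cannot cover $X$. Some zone $r$ therefore has no type-\Romanbar{1} station at distance $1$, so its response time is at least $\min(2/1,\; 1/(1/2)) = 2$, while every other zone contributes at least $1$; the total objective is hence at least $3q + 1 > T$.

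This gives \NP-hardness of the decision version of \ac{rcap} under the stated restrictions, and together with the trivial membership of \ac{rcap} in \NPO{} (the objective is polynomially computable and any allocation is polynomially certifiable) we obtain \NPO-completeness. The step I expect to require the most care is the counting argument forbidding a non-cover $A'$ from nevertheless placing every zone within distance $1$ of some type-\Romanbar{1} station; this reduces to the observation that $q$ size-$3$ sets can cover a $3q$-element ground set only as an exact cover, which is precisely the structural property of \ac{x3c} already exploited in \cref{lemma:nphard} and which transfers here unchanged.
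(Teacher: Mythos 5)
Your reduction is correct and rests on the same core idea as the paper's: reuse the \ac{x3c}-skeleton of \cref{lemma:nphard}, distinguish the two vessel types only by speeds $1$ and $0.5$, and argue via the threshold $3q$ that each of the $3q$ zones contributes $1$ or $2$ to the objective, so that value $3q$ forces the $q$ type-\Romanbar{1} stations to form an exact cover. The one genuine difference is where the element--set incidence lives: the paper keeps it in the station--zone reachability (now identical for both vessel types, so ranges no longer differ) and sets \emph{all} distances to $1$, whereas you make the reachability set $\paramVesselHarbourZone$ complete and encode the incidence in the distances ($1$ for membership, $2$ otherwise). Your variant buys a slightly stronger statement -- no range restrictions at all, not merely non-differing ones -- at the price of making feasibility trivial, so the entire reduction must run through the objective; you handle this correctly with the explicit threshold and the counting argument (which the paper leaves implicit) that $q$ triples can cover a $3q$-element set only as an exact cover. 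Two cosmetic points: the set $A'$ of type-\Romanbar{1} stations satisfies $|A'| \leq q$ rather than $|A'| = q$ (which only helps your argument), and your justification of \NPO-completeness is at the same informal level as the paper's own one-line proof, so no gap relative to the paper.
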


\begin{proof}
We assign vessel type $\Romanbar{1}$ a speed of $v_{\Romanbar{1}}=1$ and vessel type $\Romanbar{2}$ a speed of $v_{\Romanbar{2}}=0.5$ (and set all distances to $\paramDistanceHarbourZone{j}{r}=1$). The goal equivalent to finding a solution for the \ac{x3c}-instance is to find a solution for the constructed \ac{rcap}-instance with a total response time of at most $3q$. Since every one of the $3q$ zones contributes either $1$ or $2$ to the total response time depending on the vessel responding, a total of $3q$ is equivalent to using only the $q$ vessels of type \Romanbar{1} to respond to the incidents.
\end{proof}

In conclusion, there is no singular aspect of the \ac{rcap} responsible for its complexity because any feature used in the transformation by itself can be replaced by a combination of the other parameters.

\section{Integer Program for \ac{rcap}}
\label{sec:exact_sol}
The problem of rescue craft-allocation can be modeled as a (binary) \ac{mip} as seen below. 
In this context, the variable $x_{ij} \in \{0,1\}$ represents the decision to assign a ship of type $i \in \underline{\paramVessels}$ to station $j \in \underline{\paramStations}$ while the variable $y_{ijkre}$ represents the decision to let a ship of type $i$ positioned at station $j$ attend to incidents of type $k \in \underline{\paramTypes}$ occurring in zone $r \in \underline{\paramZones}$ during the water state $e \in \paramTimes$. 
In order to avoid creating unnecessary variables due to the constraints given by $\paramVesselStation,\paramVesselIncident, \paramTimes$ and $\paramVesselHarbourZone$ we define the sets
\begin{align}
    \mathcal{X} \coloneq \{&(i,j) \in \underline{\paramVessels} \times \underline{\paramStations} : (i,j) \in C\} \text{ and }\\
    \mathcal{Y} \coloneq \{&(i,j,k,r,e) \in \underline{\paramVessels} \times \underline{\paramStations} \times \underline{\paramTypes} \times \underline{\paramZones} \times \paramTimes  : \label{setY} \\
    \nonumber&(i,j) \in \mathcal{X}, (i,k) \in \paramVesselIncident, (j, i) \in e, (i,j,r) \in \paramVesselHarbourZone\} \\
    \bar{\mathcal{Y}}_{ij} \coloneq \{&(k,r,e) : (i,j,k,r,e) \in \mathcal{Y}\}.
\end{align}

Using these we can describe our problem as a \ac{mip} as follows
\begin{subequations}
\begin{align}
    \nonumber &\min && \rlap{$\displaystyle\smashoperator[l]{\sum_{(i,j,k,r,e) \in \mathcal{Y}}} 
    \frac{d_{jr}}{\paramVesselSpeed{i}}
    q_{kr}
    \paramCorrectedWaterlevel{e}
    \paramIncidentSevereity{k}
    y_{ijkre}$} && \\
    &\text{s.t.} && \smashoperator[l]{\sum_{\substack{i,j:\\(i,j,k,r,e) \in \mathcal{Y}}}} y_{ijkre} \geq 1 && \forall k \in \underline{\paramTypes},r \in \underline{\paramZones},e \in \paramTimes \label{mip1} \\
    & && \smashoperator[l]{\sum_{(k,r,e) \in \bar{\mathcal{Y}}_{ij}}} y_{ijkre} \leq |\bar{\mathcal{Y}}_{ij}| x_{ij} && \forall (i,j) \in \mathcal{X}\label{mip2}\\
    & && \smashoperator[l]{\sum_{\substack{j:\\(i,j) \in \mathcal{X}}}} x_{ij} \; \leq \; a_i && \forall i\in \underline{\paramVessels} \label{mip4} \\
    & && \smashoperator[l]{\sum_{\substack{i:\\(i,j) \in \mathcal{X}}}} x_{ij} \leq 1 && \forall j \in \underline{\paramStations} \label{mip3} \\
    & && x_{ij} \in \{ 0,1 \} && \forall (i,j) \in \mathcal{X} \label{mip5} \\
    & && y_{ijkre} \in \{ 0, 1 \} && \forall (i,j,k,r,e) \in \mathcal{Y} \label{mip6}
\end{align}
\end{subequations}
Our objective function (which is similar to \cref{eq:opt_obj_func}) is the sum of the vessel-station-zone-state-incident-assignments, each weighted by the incident-frequency $q_{kr}$, the probability of the water state $\paramCorrectedWaterlevel{e}$, the incident-severity $\paramIncidentSevereity{k}$ and most importantly the traveling time $\frac{d_{jr}}{\paramVesselSpeed{i}}$.
Constraint \ref{mip1} is used to ensure every zone-time-incident-combination is attended to and Constraint \ref{mip2} ensures that a vessel is stationed at the station it is sent out from.
Constraint \ref{mip4} limits the amount of vessels assigned to the number of vessels available for every vessel type. Constraint \ref{mip3} ensures that every station has at most one vessel assigned to it. 

Initial computational testing showed that the explicit, stochastic version of the \ac{mip} provided above performs badly in practice and starkly limits the possible resolution of zones. Thus, we also provide two subject-specific simplifications, which we  evaluate in \cref{sec:computationalstudy}.

\subsection{Corrected water levels for Stations and Vessels} \label{subsec:algo:heuristic:stationwater}
As the Baltic and the Northern Sea are continuous bodies of water within a limited geographical area, for both seas the tides at different locations are strongly correlated. We validated this for the tide data used in this work, as shown in \ref{app:sec:tidecorrelations}. The data sourcing is covered in more detail in \cref{sec:computationalstudy}.
Based on this, we can make the simplifying assumption that during the time frame a specific station-vessel combination is operable, all combinations that are overall more frequently available are also operable. 

This means we sort the station-vessel combinations by relative availability and instead of considering all possible combinations of operable stations (the set $\paramTimes$) we only focus on situations where the most frequently available station-vessel combinations are operable.
In practical terms, we calculate
\begin{equation*}
    \paramCorrectedWaterlevel{(j,i)} \coloneq \sum_{e \in \paramTimes} \mathbf{1}_e((j,i)) \paramCorrectedWaterlevel{e}
\end{equation*}
as  the relative availability of the station-vessel combination $(j,i) \in \underline{\paramStations} \times \underline{\paramVessels}$ where $\mathbf{1}$ is the indicator function.
We define the set of availabilities as 
\begin{equation*}
    P=\{\paramCorrectedWaterlevel{(j,i)}:(j,i) \in \underline{\paramStations} \times \underline{\paramVessels}\} \cup \{0, 1\}
\end{equation*}
and use the intervals 
\begin{equation*}
    \bar{P}=\{[p_1, p_2] \in P^2: p_1 < p_2 \land \nexists p_3: (p_1 < p_3 < p_2)\}
\end{equation*}
to replace $E$. This means that in the interval $[p_1, p_2]$ we assume that all station-vessel combinations with a probability above $p_1$ are available.
Due to $\left|E\right| \in \mathcal{O}(2^{\paramStations \paramVessels})$ and $\left| \bar{P} \right| \in \mathcal{O}(|P|)=\mathcal{O}(\paramStations \paramVessels)$, this simplification can significantly decrease the input size of an instance. The inaccuracy induced by this simplification is based on the difference in water levels across stations at the same point of time which is relatively small.

To adjust the \ac{mip} from \cref{sec:exact_sol} it is mainly necessary to adjust \cref{setY} to
\begin{align} \label{simplify0}
    \mathcal{Y} \coloneq \{&(i,j,k,r,t) \in \underline{\paramVessels} \times \underline{\paramStations} \times \underline{\paramTypes} \times \underline{\paramZones} \times \bar{P}  :\\
    \nonumber&(i,j) \in \mathcal{X}, (i,k) \in \paramVesselIncident, \paramCorrectedWaterlevel{(j,i)} \geq \min (t), (i,j,r) \in \paramVesselHarbourZone\} 
\end{align}
and all occurrences of $\paramTimes$ accordingly.
Additionally the target function must be changed to 
\begin{equation*} \label{simplify1}
    \min \sum_{(i,j,k,r,t) \in \mathcal{Y}}
    \frac{d_{jr}}{\paramVesselSpeed{i}}
    q_{kr}
    (\max(t)-\min(t))
    \paramIncidentSevereity{k}
    y_{ijkrt},
\end{equation*}
which is still a linear function since $\max(t)$ and $\min(t)$ are constants.

\subsection{Corrected water levels for Stations} \label{subsec:algo:heuristic:station}
The simplification above can be further expanded on by averaging the water level of a station and deleting the dependence of the vessel stationed there. Given the $\paramCorrectedWaterlevel{(j,i)}$ of the previous section, for a fixed station $j$ we calculate 
\begin{equation*}
    \paramCorrectedWaterlevel{j} \coloneq \frac{
    \sum_{i \in \underline{\paramVessels}} \paramCorrectedWaterlevel{(j,i)} \paramVesselAmount{i}
    }{
    \sum_{i \in \underline{\paramVessels}} \paramVesselAmount{i}
    } 
\end{equation*}
as relative availability by scaling the relative availability in combination with every vessel type by the vessel type amount. These further simplified water levels $\paramCorrectedWaterlevel{j}$ can be used to replace the inequality  $\paramCorrectedWaterlevel{(j,i)} \geq \min(t)$ in \cref{simplify0} with $\paramCorrectedWaterlevel{j} \geq \min(t)$. This simplification further reduces the size of $P$ to $\mathcal{O}(m)$ instead of $\mathcal{O}(nm)$ by averaging the draught values of all vessel types. The \ac{mip} can be adjusted similarly to \cref{subsec:algo:heuristic:stationwater} while changing the corresponding indices from $(j,i)$ to $j$.

\section{Computational Study}\label{sec:computationalstudy}

We tested the \ac{mip} variations on several instances of the problem. In the following, we give an overview of instance generation, the data used, assumptions made about the data, and the computational setup. We then explain how solutions were validated.
The code and data are publicly available through GitHub \cite{Mucke_RCA_2024}.

\subsection{Available data}
An instance consists of information about the vessel types, the stations, the incidents, the tide levels, and the relations thereof.

\subsubsection{Data on vessel types, stations and incidence types}
The information about the vessel types and stations is based on \cite{DGzRS.2023b}, where the \ac{dgzrs} lists the vessels and stations currently used. At the time of writing there are $11$ types of vessels and $55$ rescue stations. Every vessel type listed has a specifications sheet detailing information such as the speed of the vessel in knots, its reach depending on its speed in nautical miles, its draught in metres and information about its equipment such as material for firefighting or towing ships of several sizes. 
As the actual reach is dependent on the speed, which our model does not take into account, we always use the maximum given reach when deciding whether a vessel is capable of reaching a zone from a certain station.
Regarding the stations, we extracted their coordinates. The positions of the stations are shown in \cref{fig:stations:positions}. The vessel-station-compatibilities were randomly generated by giving each combination the probability $p=0.9$ of being allowed.
\begin{figure}[h!tb]
    \centering
    \includegraphics[width=\textwidth]{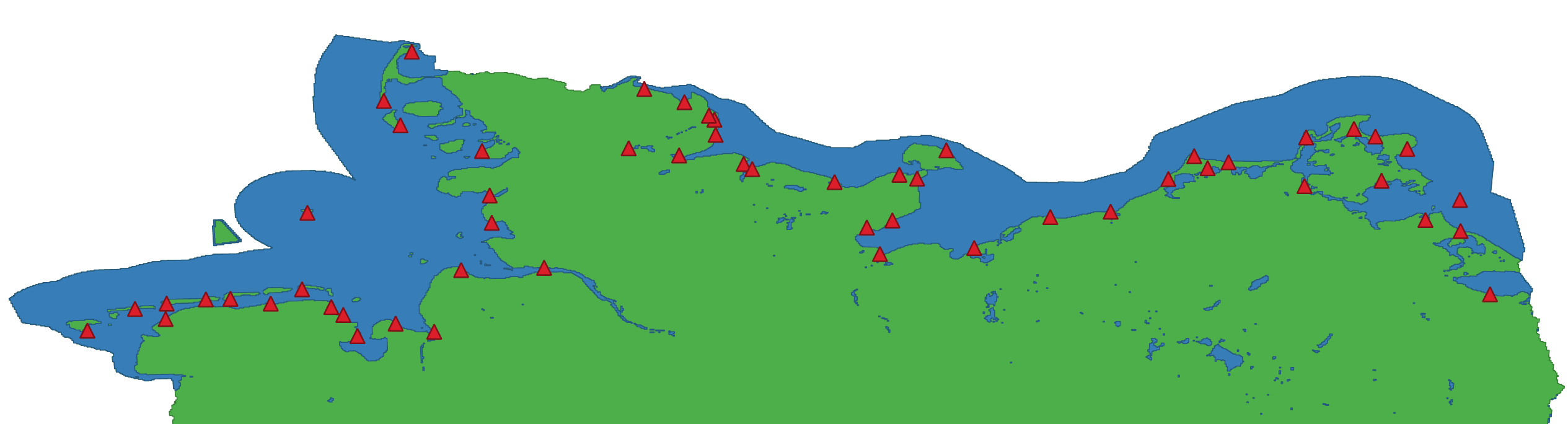}
    \caption{The red triangles show the positions of the $55$ \ac{dgzrs}-stations in the Northern and Baltic Sea.}
    \label{fig:stations:positions}
\end{figure}
While the real world data gives indicators such as vessel length and current crew regarding this compatibility, it is insufficient to create clear rules of which combinations are appropriate.

Using the equipment list of the vessel types we manually define five incident types, requiring firefighting equipment, pumping equipment, a secondary craft, a board hospital, or requiring first aid (which every vessel offers). Additionally we create several incident types requiring towing vessels of various sizes. Using the equipment lists mentioned above we decide which vessels are able to respond to them. At instance generation, the severity of every incident type is set to a random value in $[0,1]$. The frequency of an incident type occurring in a given zone is also randomly chosen in a two-step process: First if the incident type has a non-zero chance of occurring (with a value of $p=0.4$ for positive) and if it does, a random value in $[0,1]$ for the actual frequency. 

\subsubsection{Geographical data}
In our study, we focus on the German territorial waters as detailed in \cite{BundesministeriumderJustiz.19941111}. In our implementation we work with the data from \cite{BKG.2019} using QGIS (see \cite{QGISDevelopmentTeam.2023}) and its integrated Python-interface PyQGIS. We use their $100$ meter grid to filter for those map squares which are completely covered by water and then dissolving this grid by calculating the connected components.
The two biggest connected components represent the North and Baltic Sea quite closely because rivers and islands are excluded due to the $100$ meter grid. One main flaw however is that the \textit{Tiefwasserreede}, a German exclave as detailed in \cite{BundesministeriumderJustiz.19941111}, also is excluded.

\subsubsection{Generation of zones}
In order to generate zones we place $1000$ random points in the polygon using a PyQGIS script. For the distance calculation between these zones and stations we use the direct distances between two points on the surface of the earth. This means we disregard possible obstacles like islands and restricted areas and also disregard possible currents and differences between tidal levels. 
To calculate the distances on the surface of the earth we used the haversine formula (implementation from the python haversine package\footnote{\url{https://github.com/mapado/haversine}}) with an earth radius of $6371$ kilometers.
These distances combined with the range of the vessel types (divided by two to account for the way back) decide if the vessel can travel between the zones and the stations.
In order to make these instances solvable, similarly to \cite{Hornberger.2022}, we then apply $k$-means clustering on the Geo locations. \Cref{fig:heuristics:kmeans:exp:1} shows an example of the generated zones ($500$ meter grid instead $100$ meters for better visualization).
\label{subsec:algo:heuristic:kmeans}

\begin{figure}[h!tb]
    \centering
    \includegraphics[width=\textwidth]{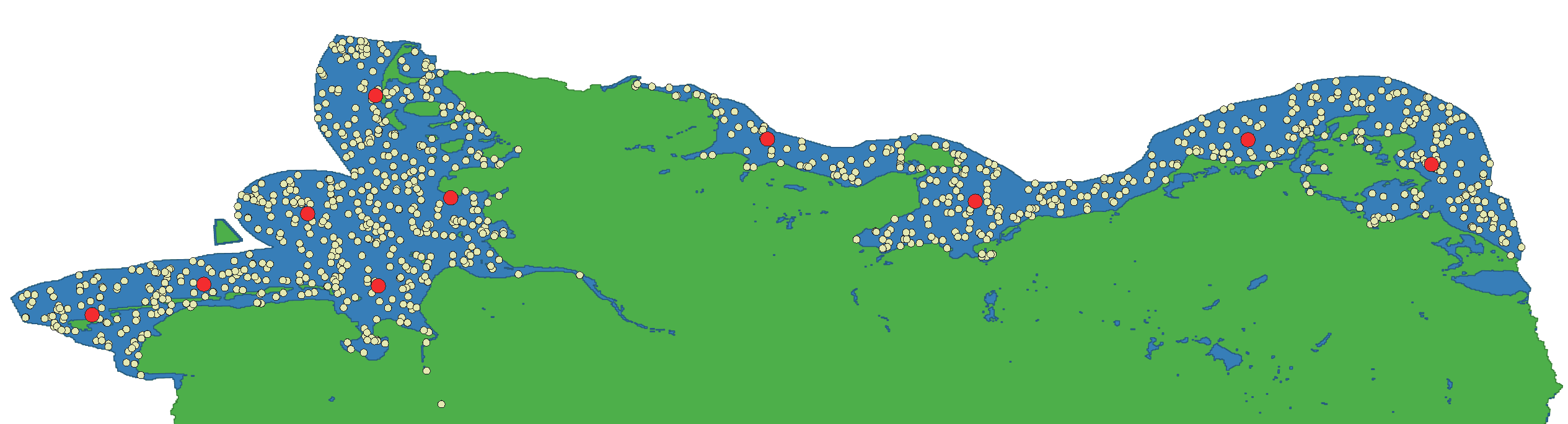}
    \caption{Incident generation across North and Baltic Sea. Yellow dots represent zones generated (amount: $1000$). Red dots represent clusters of zones (amount: $10$).}
    \label{fig:heuristics:kmeans:exp:1}
\end{figure} 

When location points are combined to one point $x$ we use the mean value of the incident rates of the old points for $x$.
A side effect is smoothing the location values. Previously, a location point close to multiple incident locations would be considered completely harmless since no incidents happened exactly there. After clustering, these incidents indicate that this adjacent point should also be considered dangerous.
For the implementation of $k$-means clustering, a deterministic KMeans-method provided by the sklearn-library was used, see \cite{Pedregosa.2011}. 

\subsubsection{Data on Tide levels}
For determining adequate tide levels, we consult \cite{GeneraldirektionWasserstraenundSchifffahrt.2023} where the measurements of water gauges across both North and Baltic Sea are published as well as their position.
These can be seen in \cref{fig:waterlevels}.

\begin{figure}[h!tb]
    \centering
    \includegraphics[width=\textwidth]{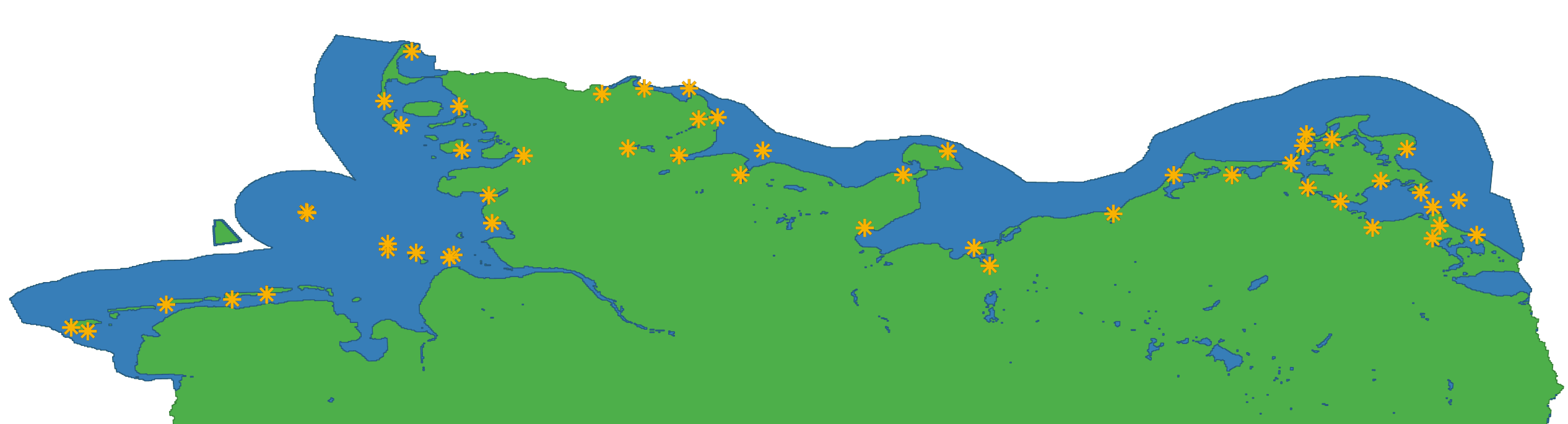}
    \caption{Locations of the gauges used to determine the water levels}
    \label{fig:waterlevels}
\end{figure}

Due to the fact most station have no water level gauge at their exact location, we approximate their water level using the three closest gauges, weighted by their distance to the station. 
This data is recorded every minute and stored for one month. For our tests we use the data between \formatdate{20}{11}{2023} and \formatdate{20}{12}{2023}, although older data can be requested from the Bundesanstalt für Gew\"asserkunde and estimates for the future are done by the Bundesamt für Seeschifffahrt und Hydrographie.
We gain depth data for the stations from \cite{BSH.2023}. In cases where no measurement for the exact location is given, a closest point is used. 
In practical application a safety margin might be necessary, which we do not consider for now. In total, we arrive at around $8700$ different tidal states.

By dividing the data into the North Sea and and the Baltic Sea we receive two additional smaller data sets that could be used for further comparisons. As the DGzRS however does not need to plan for them independently but simultaneously for both we do not include this in our computational study.

\subsection{Setup for the study}
The code was written in Python (Version 3.11.2) with Gurobi (Version 10.0.0,  \cite{GurobiOptimization.2023}) and was executed on the High Performance Cluster of the RWTH Aachen\footnote{\url{https://web.archive.org/web/20240107173048/https://help.itc.rwth-aachen.de/service/rhr4fjjutttf/article/fbd107191cf14c4b8307f44f545cf68a/}}, using Intel Xeon Platinum 8160 Processors (2.1 GHz).

The three different approaches are named \textit{many-zones} (\cref{subsec:algo:heuristic:station}), \textit{better-tidal} (\cref{subsec:algo:heuristic:stationwater}) and \textit{best-tidal} (\cref{sec:exact_sol}).
For comparability the instances of all three approaches run on a single core CPU. Since runtime and RAM limitation is needed for the usage of the High Performance Cluster, we cap both. To ensure a solution by Gurobi, we set the \textit{TimeLimit} property of Gurobi to 6 hours and limit the total time for the jobs on the Cluster to 9 hours. 

All instances are initialized with the same $1000$ zones. Because this amount is too high for all approaches, we use the $k$-means clustering algorithm to shrink them down to a desired number $n_z \in \{1, \dots, 1000\}$. As \textit{better-tidal} is more difficult to solve than \textit{many-zones}, the approach \textit{many-zones} uses $n_z \in \{10,50,100\}$ zones for problem solving while \textit{better-tidal} uses $n_z \in \{10, 20, 30\}$ zones. Both approaches have access to 32GB RAM. Because \textit{best-tidal} is very hard to solve, we only use the values $n_z \in \{1, 2, 5\}$ for it with access to 64GM RAM. The fact that the job with $n_z=5$ did run out of memory led us to the decision to discard this job for to practical concerns.
To compare the different approaches we calculate all solutions on the objective function of \textit{best-tidal} on all $1000$ zones.
For evaluation we run each configuration with $10$ different seeds for the random parameters. This means in total we have $10 \cdot 3$ instances for every approach, which results in $90$ jobs overall.

\subsection{Prelininary computational results}
First, the build times of the models, as can be seen in \cref{fig:compstudy:results:2}, already limit the usability of \textit{best-tidal}.
\begin{figure}[h!tb]
    \centering
    \includegraphics[width=\textwidth]{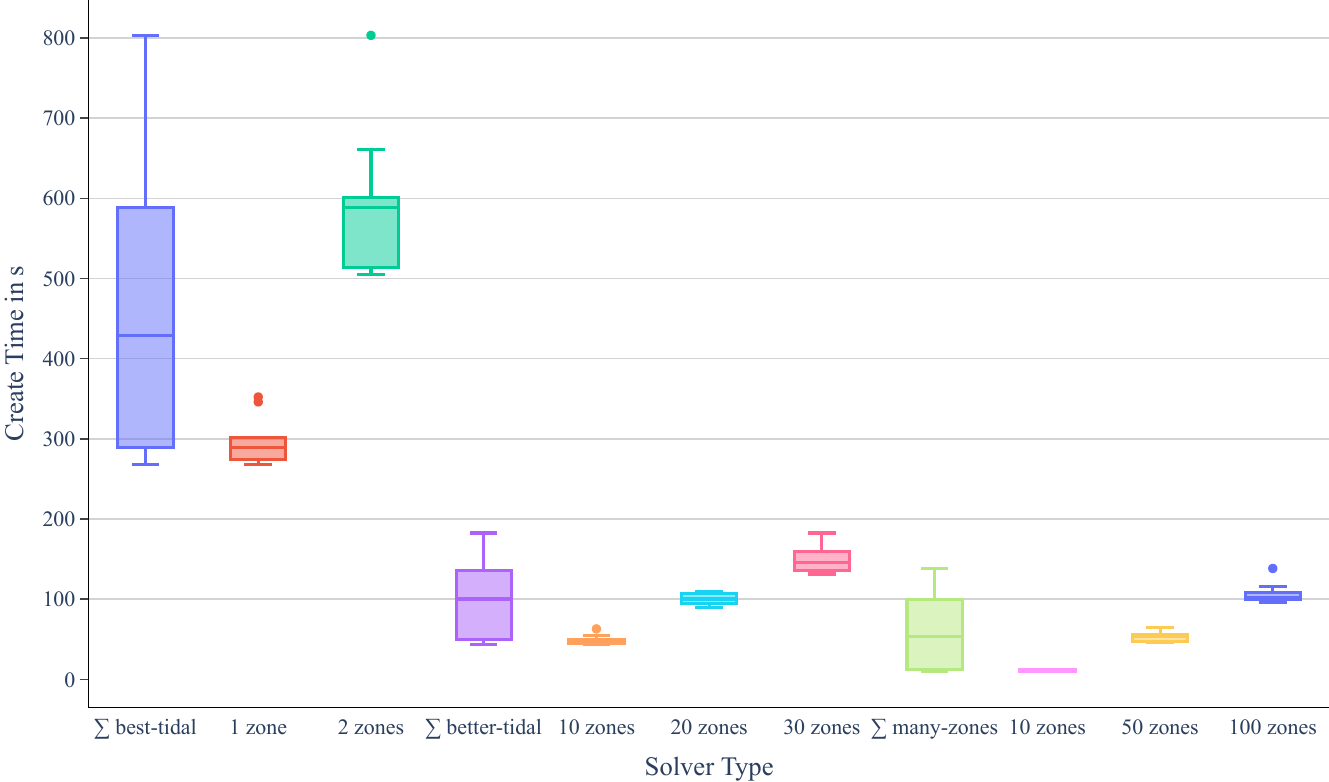}
    \caption{Time required to build the Gurobi Models and cluster the zones.}\label{fig:compstudy:results:2}
\end{figure}
This, combined with the huge memory requirement of \textit{best-tidal} as can be seen in \cref{fig:compstudy:results:3}, validates that running \textit{best-tidal} without zone clustering is not viable. 
\begin{figure}[h!tb]
    \centering
    \includegraphics[width=\textwidth]{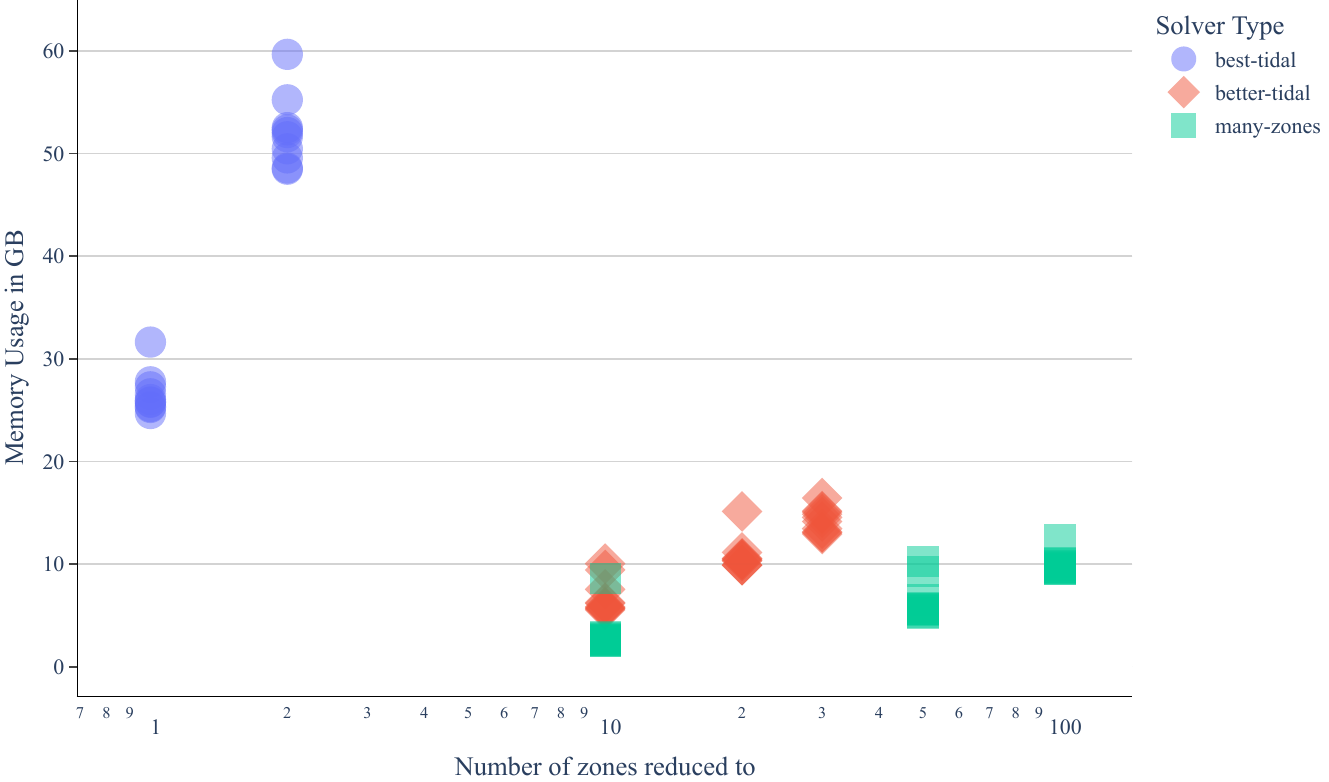}
    \caption{Memory usage of the complete instances with $1000$ zones and the complete tidal information. The $x$-axis is $\log$ scaled for better readability.} \label{fig:compstudy:results:3}
\end{figure}
Therefore it is necessary to compare the solution quality of different heuristics and different amounts of zone clusters, as for example $100$ zone clusters in \textit{many-zones} result in more memory efficiency and a lower run time than $2$ clusters in \textit{best-tidal}.
To do so, we extract the vessel-station assignment $M$ of every solution. We then validate the second part of the solution ($u$) by checking for all combinations of  each of the original $1000$ zones, each incident type and each tidal state (according to our best available tidal data) which of the stationed compatible vessels can respond the fastest. In this process we also confirm that all of the produced solutions for the different instances are feasible.

\section{Discussion}\label{sec:discussion}
The resulting objective values can be seen in \cref{fig:compstudy:results:1}. It is important to note that not every instance has the same optimal solution value.

\begin{figure}[h!tb]
    \centering
    \includegraphics[width=\textwidth]{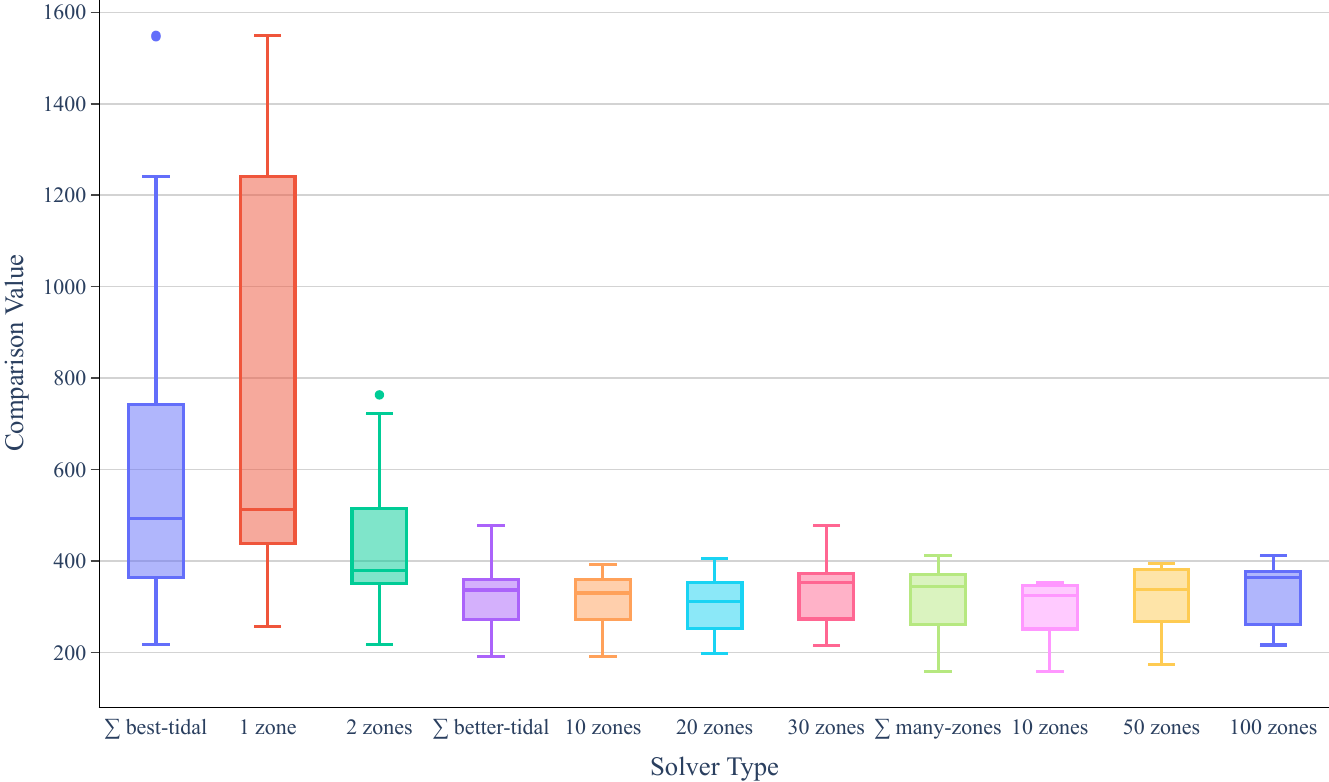}
    \caption{Objective Value in the complete instances with $1000$ zones and the complete tidal information.} \label{fig:compstudy:results:1}
\end{figure}

Looking at the values it is clear that in our instances we gain better results with a higher amount of zone clusters in exchange for a reduction in tidal states.
This is an be explained by the correlation between the tidal levels at the different stations, as noted before. Additionally we see that the difference between \textit{better-tidal} and \textit{many-zones} is rather small. This is most likely due to the fact that the vessels have quite similar draught, the vessel with the lowest value has $0.5$ meters while the one with the highest has $2.7$ meters. Furthermore, \textit{many-zones} is more memory efficient and managed, unlike \textit{better-tidal}, to sometimes find the optimal solution to the simplified problem within the time limit and therefore exited faster. We can also see that clustering to a small amount of zones does not seem to have a big influence on \textit{many-zones} and \textit{better-tidal}.

Note that currently the vessel-station-compatibility is randomized. This has to be replaced with data of the \ac{dgzrs} detailing if a vessel could be placed at a station, taking the vessel size and crew into account. For example a vessel can not be placed at a station if it is too large or if the locally available staff size is too low.
For non-randomized incident rates, it might be worthwhile to overhaul the concept of zones. It is more appropriate to divide the region into equally sized zones. Currently this can not be done because increasing the base amount of zones leads to convergence against the same incident rates with $k$-means clustering. Real world incident data would allow for equally sized zones without such statistical issues. However the exact amount of zones would be less controllable, since the input parameter for the zone creation would be zone size instead of zone amount.

At the moment tide data is used to determine availability of stations. Of course zones are also affected by the tides. It might be worthwhile to also give the zones a tide property and expand route calculation to note temporarily inaccessible zones. Moreover tides impacts the travel time. Considering all these aspects would lead to an even larger and more complex model.

To improve the accuracy of the results, better tide data could be used. A possibility is to enlarge the time horizon of tide data to one year and average it. Another possibility is to use forecast data to make the results more reliable for future planning.

\section{Conclusion}\label{sec:conclusion}

In this work, we built a mathematical model for the \acf{rcap}. While in general, \ac{rcap} is $\NP$-hard and formulating it as a \ac{mip} leads to large formulations, there are approaches to effectively reduce the problem size. We have shown that clustering of incidents as well as simplification of the possible tide states lead to significant run time reductions. The degree of simplification regarding the tide states can be varied.

In order to compare these approaches we set up a computational study in which we tested them under restricted resources in regards to memory and runtime. To do so we used real world data of the \acf{dgzrs} and other sources. Some of the data such as incidents and vessel-station compatibility was generated randomly due to lacking data. Furthermore, this lack of data is also the reason for not comparing our results to the solution currently used, as our results might be infeasible in practice and the real solution in turn infeasible for our instances.

Our computational study establishes the validity of our concept of simplified tide representation and clustering of the incident zones. As shown in \cref{fig:compstudy:results:1} there is no significant difference in quality between \textit{better-tidal} and \textit{many-zones}. Beyond that \textit{best-tidal} returns worse results. Since, in the context of our computational study, using more zones didi not lead to better results, using a $k$-means clustering algorithm (\cref{subsec:algo:heuristic:kmeans}) has shown to be a good approach.
Moreover we obtained feasible solutions, which indicates a practical ability of our model. It might therefore make sense to test apply the model to real world data of the \ac{dgzrs} to attempt improving their current assignment.

\subsection{Further research}
The model presented in this work can be expanded on in multiple ways. First of all, the estimation of incident risk can be further refined. Part of this would be to introduce an uncertainty factor so that calculated solutions are robust to changes in incident data.


Second, our assumption is that every incident is isolated, meaning a vessel can respond to an infinite number of incidents. While the number of incidents suggests a daily average of less than six incidents across the entire region making it unlikely that a vessel is required at two location simultaneously, it might be worthwhile to balance the workload across the vessels available.

Third, in our model the assumption is that the amount of vessels is fixed. If this number is reduced, it might be interesting to analyze which vessel should be removed and what the increase in response times would be. Conversely, it is not trivial which improvement an additional vessel would make and where to place it.

Fourth, we only consider the haversine distance between harbors and zones. However, in the real-world factors, like fairway, depth of water, restricted areas and even the location of islands have to be considered when computing a route.  Implementing this would serve to make the modelling more valid to practitioners. Furthermore, as suggested by \cite{Siljander2015}, wind and wave patterns also influence rescue speed, which would lend itself to stochastic optimization.

Finally it would be interesting to see if there are yearly patterns in the incident data and how beneficial a seasonal reallocation of the rescue vessels would be.

\appendix

\section{Tide Correlation Data}
\label{app:sec:tidecorrelations}
\resizebox{\textwidth}{!}{
}

\begin{acronym}[ECU] 
    \acro{dgzrs}[DGzRS]{Deutsche Gesellschaft zur Rettung Schiffbrüchiger (German Maritime Search and Rescue Service)}
    \acro{x3c}[X3CP]{Exact Cover by 3-Sets Problem}
    \acro{rcap}[RCAP]{Rescue Craft Allocation Problem}
    \acro{frcap}[f-RCAP]{Feasibility Rescue Craft Allocation Problem}
    \acro{mip}[IP]{Integer Program}
    \acro{cavabb}[CAVABB]{Closest Assignment Vessel Allocation by Branch and Bound}
    \acro{nhn}[NHN]{Normalh\"ohennull}
    \acro{SAR}[SAR]{Search-and-Rescue}
\end{acronym}

\bibliographystyle{elsarticle-num-names} 
\bibliography{cas-refs}

\begin{thebibliography}{28}
\expandafter\ifx\csname natexlab\endcsname\relax\def\natexlab#1{#1}\fi
\providecommand{\url}[1]{\texttt{#1}}
\providecommand{\href}[2]{#2}
\providecommand{\path}[1]{#1}
\providecommand{\DOIprefix}{doi:}
\providecommand{\ArXivprefix}{arXiv:}
\providecommand{\URLprefix}{URL: }
\providecommand{\Pubmedprefix}{pmid:}
\providecommand{\doi}[1]{\href{http://dx.doi.org/#1}{\path{#1}}}
\providecommand{\Pubmed}[1]{\href{pmid:#1}{\path{#1}}}
\providecommand{\bibinfo}[2]{#2}
\ifx\xfnm\relax \def\xfnm[#1]{\unskip,\space#1}\fi
\bibitem[{{Bundesamt f{\"u}r Seeschiffahrt und
  Hydrographie}(1994)}]{BundesministeriumderJustiz.19941111}
\bibinfo{author}{{Bundesamt f{\"u}r Seeschiffahrt und Hydrographie}},
  \bibinfo{title}{{Bekanntmachung der Proklamation der Bundesregierung {\"u}ber
  die Ausweitung des deutschen K{\"u}stenmeeres: K{\"u}stmProkBek (Federal
  declaration on the extension of the German costal seas)}},
  \bibinfo{year}{1994}. \URLprefix
  \url{https://www.gesetze-im-internet.de/k_stmprokbek/BJNR342800994.html}.
\bibitem[{DGz(2023)}]{DGzRS.2023}
\bibinfo{title}{{DGzRS Jahrbuch (annual report) 2023}}, \bibinfo{year}{2023}.
  \URLprefix
  \url{https://www.seenotretter.de/fileadmin/seenotretter/presse-pdfs/DGzRS-Jahrbuch-2023.pdf}.
\bibitem[{Solberg et~al.(2020)Solberg, Jensen, Barane, Hagen, Kjøl, Johansen,
  and Gudmestad}]{timematters}
\bibinfo{author}{K.~E. Solberg}, \bibinfo{author}{J.~E. Jensen},
  \bibinfo{author}{E.~Barane}, \bibinfo{author}{S.~Hagen},
  \bibinfo{author}{A.~Kjøl}, \bibinfo{author}{G.~Johansen},
  \bibinfo{author}{O.~T. Gudmestad},
\newblock \bibinfo{title}{Time to rescue for different paths to survival
  following a marine incident},
\newblock \bibinfo{journal}{Journal of Marine Science and Engineering}
  \bibinfo{volume}{8} (\bibinfo{year}{2020}). \URLprefix
  \url{https://www.mdpi.com/2077-1312/8/12/997}.
  \DOIprefix\doi{10.3390/jmse8120997}.
\bibitem[{Azofra et~al.(2007)Azofra, P{\'e}rez-Labajos, Blanco, and
  Ach{\'u}tegui}]{Azofra.2007}
\bibinfo{author}{M.~Azofra}, \bibinfo{author}{C.~A. P{\'e}rez-Labajos},
  \bibinfo{author}{B.~Blanco}, \bibinfo{author}{J.~J. Ach{\'u}tegui},
\newblock \bibinfo{title}{{Optimum placement of sea rescue resources}},
\newblock \bibinfo{journal}{{Safety Science}} \bibinfo{volume}{45}
  (\bibinfo{year}{2007}) \bibinfo{pages}{941--951}. \URLprefix
  \url{https://www.sciencedirect.com/science/article/pii/S092575350600124X}.
  \DOIprefix\doi{10.1016/j.ssci.2006.09.002}.
\bibitem[{Afshartous et~al.(2009)Afshartous, Guan, and
  Mehrotra}]{Afshartous.2009}
\bibinfo{author}{D.~Afshartous}, \bibinfo{author}{Y.~Guan},
  \bibinfo{author}{A.~Mehrotra},
\newblock \bibinfo{title}{{US Coast Guard air station location with respect to
  distress calls: A spatial statistics and optimization based methodology}},
\newblock \bibinfo{journal}{{European Journal of Operational Research}}
  \bibinfo{volume}{196} (\bibinfo{year}{2009}) \bibinfo{pages}{1086--1096}.
  \URLprefix
  \url{https://www.sciencedirect.com/science/article/pii/S0377221708003731}.
  \DOIprefix\doi{10.1016/j.ejor.2008.04.010}.
\bibitem[{Jin et~al.(2021)Jin, Wang, Song, and Zhongyin}]{jin2021}
\bibinfo{author}{Y.~Jin}, \bibinfo{author}{N.~Wang}, \bibinfo{author}{Y.~Song},
  \bibinfo{author}{G.~Zhongyin},
\newblock \bibinfo{title}{Optimization model and algorithm to locate rescue
  bases and allocate rescue vessels in remote oceans},
\newblock \bibinfo{journal}{Soft Computing} \bibinfo{volume}{25}
  (\bibinfo{year}{2021}) \bibinfo{pages}{1--18}.
  \DOIprefix\doi{10.1007/s00500-020-05378-6}.
\bibitem[{Razi and Karatas(2016)}]{Razi.2016}
\bibinfo{author}{N.~Razi}, \bibinfo{author}{M.~Karatas},
\newblock \bibinfo{title}{{A multi-objective model for locating search and
  rescue boats}},
\newblock \bibinfo{journal}{{European Journal of Operational Research}}
  \bibinfo{volume}{254} (\bibinfo{year}{2016}) \bibinfo{pages}{279--293}.
  \URLprefix
  \url{https://www.sciencedirect.com/science/article/pii/S0377221716301540}.
  \DOIprefix\doi{10.1016/j.ejor.2016.03.026}.
\bibitem[{Wagner and Radovilsky(2012)}]{Wagner.2012}
\bibinfo{author}{M.~R. Wagner}, \bibinfo{author}{Z.~Radovilsky},
\newblock \bibinfo{title}{{Optimizing Boat Resources at the U.S. Coast Guard:
  Deterministic and Stochastic Models}},
\newblock \bibinfo{journal}{{Operations Research}} \bibinfo{volume}{60}
  (\bibinfo{year}{2012}) \bibinfo{pages}{1035--1049}.
  \DOIprefix\doi{10.1287/opre.1120.1085}.
\bibitem[{Karatas(2021)}]{Karatas.2021}
\bibinfo{author}{M.~Karatas},
\newblock \bibinfo{title}{{A dynamic multi-objective location-allocation model
  for search and rescue assets}},
\newblock \bibinfo{journal}{{European Journal of Operational Research}}
  \bibinfo{volume}{288} (\bibinfo{year}{2021}) \bibinfo{pages}{620--633}.
  \URLprefix
  \url{https://www.sciencedirect.com/science/article/pii/S0377221720305439}.
  \DOIprefix\doi{10.1016/j.ejor.2020.06.003}.
\bibitem[{Xinyuan et~al.(2023)Xinyuan, Ran, Shining, Zhiyuan, Haoyu, and
  Shuaian}]{chen2021}
\bibinfo{author}{C.~Xinyuan}, \bibinfo{author}{Y.~Ran},
  \bibinfo{author}{W.~Shining}, \bibinfo{author}{L.~Zhiyuan},
  \bibinfo{author}{M.~Haoyu}, \bibinfo{author}{W.~Shuaian},
\newblock \bibinfo{title}{A fleet deployment model to minimise the covering
  time of maritime rescue missions},
\newblock \bibinfo{journal}{Maritime Policy \& Management} \bibinfo{volume}{50}
  (\bibinfo{year}{2023}) \bibinfo{pages}{724--749}. \URLprefix
  \url{https://doi.org/10.1080/03088839.2021.2017042}.
  \DOIprefix\doi{10.1080/03088839.2021.2017042}.
  \href{http://arxiv.org/abs/https://doi.org/10.1080/03088839.2021.2017042}{{\tt
  arXiv:https://doi.org/10.1080/03088839.2021.2017042}}.
\bibitem[{Jung and Yoo(2019)}]{Jung2019}
\bibinfo{author}{C.-Y. Jung}, \bibinfo{author}{S.~Yoo},
\newblock \bibinfo{title}{Optimal rescue ship locations using image processing
  and clustering},
\newblock \bibinfo{journal}{Symmetry} \bibinfo{volume}{11}
  (\bibinfo{year}{2019}) \bibinfo{pages}{32}.
  \DOIprefix\doi{10.3390/sym11010032}.
\bibitem[{Hornberger et~al.(2022)Hornberger, Cox, and Lunday}]{Hornberger.2022}
\bibinfo{author}{Z.~T. Hornberger}, \bibinfo{author}{B.~A. Cox},
  \bibinfo{author}{B.~J. Lunday},
\newblock \bibinfo{title}{{Optimal heterogeneous search and rescue asset
  location modeling for expected spatiotemporal demands using historic event
  data}},
\newblock \bibinfo{journal}{{Journal of the Operational Research Society}}
  \bibinfo{volume}{73} (\bibinfo{year}{2022}) \bibinfo{pages}{1137--1154}.
  \DOIprefix\doi{10.1080/01605682.2021.1877576}.
\bibitem[{Ma et~al.(2024)Ma, Wang, Zhou, and Liu}]{ma2024}
\bibinfo{author}{Q.~Ma}, \bibinfo{author}{Z.~Wang}, \bibinfo{author}{T.~Zhou},
  \bibinfo{author}{Z.~Liu},
\newblock \bibinfo{title}{Robust optimization method of emergency resource
  allocation for risk management in inland waterways},
\newblock \bibinfo{journal}{Brodogradnja} \bibinfo{volume}{75}
  (\bibinfo{year}{2024}) \bibinfo{pages}{1--22}.
  \DOIprefix\doi{10.21278/brod75103}.
\bibitem[{Zhou(2022)}]{Zhou.2022}
\bibinfo{author}{X.~Zhou},
\newblock \bibinfo{title}{{A comprehensive framework for assessing navigation
  risk and deploying maritime emergency resources in the South China Sea}},
\newblock \bibinfo{journal}{{Ocean Engineering}} \bibinfo{volume}{248}
  (\bibinfo{year}{2022}) \bibinfo{pages}{110797}. \URLprefix
  \url{https://www.sciencedirect.com/science/article/pii/S0029801822002438}.
  \DOIprefix\doi{10.1016/j.oceaneng.2022.110797}.
\bibitem[{{Feldens Ferrari} and Chen(2020)}]{FeldensFerrari.2020}
\bibinfo{author}{J.~{Feldens Ferrari}}, \bibinfo{author}{M.~Chen},
\newblock \bibinfo{title}{{A mathematical model for tactical aerial search and
  rescue fleet and operation planning}},
\newblock \bibinfo{journal}{{International Journal of Disaster Risk Reduction}}
  \bibinfo{volume}{50} (\bibinfo{year}{2020}) \bibinfo{pages}{101680}.
  \URLprefix
  \url{https://www.sciencedirect.com/science/article/pii/S2212420919310921}.
  \DOIprefix\doi{10.1016/j.ijdrr.2020.101680}.
\bibitem[{Ai et~al.(2015)Ai, Lu, and Zhang}]{Ai.2015}
\bibinfo{author}{Y.-f. Ai}, \bibinfo{author}{J.~Lu}, \bibinfo{author}{L.-l.
  Zhang},
\newblock \bibinfo{title}{{The optimization model for the location of maritime
  emergency supplies reserve bases and the configuration of salvage vessels}},
\newblock \bibinfo{journal}{{Transportation Research Part E: Logistics and
  Transportation Review}} \bibinfo{volume}{83} (\bibinfo{year}{2015})
  \bibinfo{pages}{170--188}. \URLprefix
  \url{https://www.sciencedirect.com/science/article/pii/S1366554515001714}.
  \DOIprefix\doi{10.1016/j.tre.2015.09.006}.
\bibitem[{Pelot et~al.(2015)Pelot, Akbari, and Li}]{Pelot2015}
\bibinfo{author}{R.~Pelot}, \bibinfo{author}{A.~Akbari},
  \bibinfo{author}{L.~Li}, \bibinfo{title}{Vessel Location Modeling for
  Maritime Search and Rescue}, \bibinfo{publisher}{Springer International
  Publishing}, \bibinfo{address}{Cham}, \bibinfo{year}{2015}, pp.
  \bibinfo{pages}{369--402}. \URLprefix
  \url{https://doi.org/10.1007/978-3-319-20282-2_16}.
  \DOIprefix\doi{10.1007/978-3-319-20282-2_16}.
\bibitem[{Akbayrak and Tural(2017)}]{Akbayrak2017}
\bibinfo{author}{E.~Akbayrak}, \bibinfo{author}{M.~K. Tural},
\newblock \bibinfo{title}{Maritime search and rescue (msar) operations: An
  analysis of optimal asset allocation},
\newblock in: \bibinfo{editor}{A.~A. Pinto}, \bibinfo{editor}{D.~Zilberman}
  (Eds.), \bibinfo{booktitle}{Modeling, Dynamics, Optimization and Bioeconomics
  II}, \bibinfo{publisher}{Springer International Publishing},
  \bibinfo{address}{Cham}, \bibinfo{year}{2017}, pp. \bibinfo{pages}{23--33}.
\bibitem[{Garey and Johnson(1979)}]{Garey.1979}
\bibinfo{author}{M.~R. Garey}, \bibinfo{author}{D.~S. Johnson},
  \bibinfo{title}{{Computers and Intractability: A Guide to the Theory of
  NP-Completeness}}, {A series of books in the mathematical sciences},
  \bibinfo{publisher}{Freeman}, \bibinfo{address}{New York, NY},
  \bibinfo{year}{1979}.
\bibitem[{Mucke et~al.(2024)Mucke, Renneke, Seesemann, and
  Engelhardt}]{Mucke_RCA_2024}
\bibinfo{author}{T.~A. Mucke}, \bibinfo{author}{A.~Renneke},
  \bibinfo{author}{F.~Seesemann}, \bibinfo{author}{F.~Engelhardt},
  \bibinfo{title}{{RCA}}, \bibinfo{year}{2024}. \URLprefix
  \url{https://github.com/veni-vidi-code/RCA}.
  \DOIprefix\doi{10.5281/zenodo.10849562}.
\bibitem[{DGz(2023)}]{DGzRS.2023b}
\bibinfo{title}{{DGzRS Website}}, \bibinfo{year}{2023}. \URLprefix
  \url{https://web.archive.org/web/20231203095559/www.seenotretter.de/crews-stationen}.
\bibitem[{BKG(2019)}]{BKG.2019}
\bibinfo{title}{{Geographische Gitter f{\"u}r Deutschland in UTM-Projektion
  (GeoGitter national)(UTM grid for Germany)}}, \bibinfo{year}{2019}.
  \URLprefix
  \url{https://gdz.bkg.bund.de/index.php/default/open-data/geographische-gitter-fur-deutschland-in-utm-projektion-geogitter-national.html},
  \bibinfo{note}{{\copyright} GeoBasis-DE / BKG 2023, dl-de/by-2-0}.
\bibitem[{{QGIS Development Team}(2023)}]{QGISDevelopmentTeam.2023}
\bibinfo{author}{{QGIS Development Team}}, \bibinfo{title}{{QGIS Geographic
  Information System}}, \bibinfo{howpublished}{QGIS Association},
  \bibinfo{year}{2023}. \URLprefix \url{https://www.qgis.org}.
\bibitem[{Pedregosa et~al.(2011)Pedregosa, Varoquaux, Gramfort, Michel,
  Thirion, Grisel, Blondel, Prettenhofer, Weiss, Dubourg, Vanderplas, Passos,
  Cournapeau, Brucher, Perrot, and Duchesnay}]{Pedregosa.2011}
\bibinfo{author}{F.~Pedregosa}, \bibinfo{author}{G.~Varoquaux},
  \bibinfo{author}{A.~Gramfort}, \bibinfo{author}{V.~Michel},
  \bibinfo{author}{B.~Thirion}, \bibinfo{author}{O.~Grisel},
  \bibinfo{author}{M.~Blondel}, \bibinfo{author}{P.~Prettenhofer},
  \bibinfo{author}{R.~Weiss}, \bibinfo{author}{V.~Dubourg},
  \bibinfo{author}{J.~Vanderplas}, \bibinfo{author}{A.~Passos},
  \bibinfo{author}{D.~Cournapeau}, \bibinfo{author}{M.~Brucher},
  \bibinfo{author}{M.~Perrot}, \bibinfo{author}{E.~Duchesnay},
\newblock \bibinfo{title}{{Scikit-learn: Machine Learning in Python}},
\newblock \bibinfo{journal}{{Journal of Machine Learning Research}}
  \bibinfo{volume}{12} (\bibinfo{year}{2011}) \bibinfo{pages}{2825--2830}.
\bibitem[{{Generaldirektion Wasserstra{\ss}en und Schifffahrt (Federal
  Waterways and Shipping
  Agency)}(2023)}]{GeneraldirektionWasserstraenundSchifffahrt.2023}
\bibinfo{author}{{Generaldirektion Wasserstra{\ss}en und Schifffahrt (Federal
  Waterways and Shipping Agency)}}, \bibinfo{title}{{Pegelmessung (level
  measurement)}}, \bibinfo{year}{2023}. \URLprefix
  \url{https://www.pegelonline.wsv.de/}.
\bibitem[{BSH(2023)}]{BSH.2023}
\bibinfo{title}{{Bathymetry Map}}, \bibinfo{year}{2023}. \URLprefix
  \url{https://www.geoseaportal.de/mapapps/resources/apps/bathymetrie/index.html?lang=de},
  \bibinfo{note}{{\copyright} GeoBasis-DE / BSH 2024, dl-de/by-2-0}.
\bibitem[{{Gurobi Optimization, LLC}(2023)}]{GurobiOptimization.2023}
\bibinfo{author}{{Gurobi Optimization, LLC}}, \bibinfo{title}{{Gurobi Optimizer
  Reference Manual}}, \bibinfo{year}{2023}. \URLprefix
  \url{https://www.gurobi.com}.
\bibitem[{Siljander et~al.(2015)Siljander, Venäläinen, Goerlandt, and
  Pellikka}]{Siljander2015}
\bibinfo{author}{M.~Siljander}, \bibinfo{author}{E.~Venäläinen},
  \bibinfo{author}{F.~Goerlandt}, \bibinfo{author}{P.~Pellikka},
\newblock \bibinfo{title}{{GIS}-based cost distance modelling to support
  strategic maritime search and rescue planning: {A} feasibility study},
\newblock \bibinfo{journal}{Applied Geography} \bibinfo{volume}{57}
  (\bibinfo{year}{2015}) \bibinfo{pages}{54--70}. \URLprefix
  \url{https://www.sciencedirect.com/science/article/pii/S0143622814002975}.
  \DOIprefix\doi{https://doi.org/10.1016/j.apgeog.2014.12.013}.

\end{thebibliography}
\end{document}